\documentclass[11pt,reqno]{amsart}

\usepackage[utf8]{inputenc}
\usepackage[T1]{fontenc}

\usepackage{mathtools}
\mathtoolsset{mathic}
\usepackage{cfr-lm}
\usepackage{dsfont}
\usepackage{microtype}
\usepackage[top=3.75cm, bottom=3cm, left=3cm, right=3cm]{geometry}
\usepackage{bm}
\usepackage[mathscr]{eucal}

\numberwithin{equation}{section}

\usepackage{graphicx}

\usepackage[usenames,dvipsnames,table]{xcolor}

\usepackage[colorlinks=true, citecolor=citation, urlcolor=citation, linkcolor=reference]{hyperref}
\hypersetup{
  pdftitle={Anchored Dyck Paths},
  pdfauthor={Jimmy Dillies},
  colorlinks=true,
  breaklinks=true,
  bookmarksopen=true,
  bookmarksnumbered=true,
  pdfpagemode=UseOutlines,
  plainpages=false,
  linkcolor=Plum,
  citecolor=Plum,
  anchorcolor=green,
  pagebackref=true}
\definecolor{citation}{rgb}{0,.40,.80}
\definecolor{reference}{rgb}{.80,0,.40}

\newcommand{\IN}{\mathbb{N}}
\newcommand{\IZ}{\mathbb{Z}}
\newcommand{\IQ}{\mathbb{Q}}

\theoremstyle{plain}

\newtheorem{thm}{Theorem}

\newtheorem{lem}[thm]{Lemma}
\newtheorem{rmk}[thm]{Remark}
\newtheorem{eg}[thm]{Example}
\newtheorem*{defi}{Definition}

\usepackage{thm-restate}

\usepackage{tikz}
\usetikzlibrary{decorations.markings, arrows.meta}

\tikzset{
  necklace string/.style={gray!55, thin},
  necklace bead/.style={circle, draw=black, inner sep=0pt, minimum size=6.5pt},
  necklace separator/.style={black, line width=.8pt},
  dyck grid/.style={step=1, gray!45, thin},
  dyck diagonal/.style={black!65, thin},
  dyck path/.style={very thick, blue!70!black},
  dyck vertex/.style={blue!70!black}
}
\newcommand{\DrawBinaryNecklace}[2][\NecklaceFigureRadius]{
  \path[use as bounding box] (-1.15,-1.15) rectangle (1.15,1.15);
  \draw[necklace string] (0,0) circle (#1);
  \foreach \ang/\col in {#2}
    \node[necklace bead, fill=\col] at (\ang:#1) {};
}
\newcommand{\DrawDyckPath}[4][\DyckFigureSize]{
  \path[use as bounding box] (0,0) rectangle (#1,#1);
  \pgfmathsetmacro{\dyckscale}{#1/max(#2,#3)}
  \pgfmathsetmacro{\dyckxoffset}{(#1-#2*\dyckscale)/2}
  \pgfmathsetmacro{\dyckyoffset}{(#1-#3*\dyckscale)/2}
  \begin{scope}[shift={(\dyckxoffset,\dyckyoffset)},x=\dyckscale cm,y=\dyckscale cm]
    \draw[dyck grid] (0,0) grid (#2,#3);
    \draw[dyck diagonal] (0,0)--(#2,#3);
    \draw[dyck path] #4;
  \end{scope}
}

\begin{document}

\title{Anchored Dyck Paths}

\author{Jimmy Dillies}
\address{Department of Mathematics, University of Georgia, Athens, GA 30602, USA}
\email{jimmy.dillies@uga.edu}

\date{May, 2026}
\subjclass[2020]{05A19, 05E10}
\keywords{Dyck Path, Necklace, Catalan}

\begin{abstract}
We answer a question of Simental by providing a combinatorial interpretation of a formula which generalizes rational Catalan numbers and which appears in the study of  Springer fibers ~\cite{EtingofKrylovEA21}. We provide an interpretation in terms of binary necklaces as well as anchored Dyck paths.
\end{abstract}

\maketitle

\section{Introduction}
\label{sec:intro}

\subsection{Motivation}
``Higher rank'' versions of Catalan numbers appear as a byproduct of representation-theoretic problems and in the study of Springer varieties associated to singular curves.
In~\cite{EtingofKrylovEA21,GonzalezSimentalVazirani24}, the authors encounter the expression
\[
C^{\mathop{gen}}_{m,n}=\frac{\gcd(m,n)}{m+n}\binom{m+n}{n}
\]

This expression generalizes the ordinary rational Catalan numbers and is the specialization at $q=1$ of the normalized rational $q$-Catalan polynomial
\[
\overline C_{m,n}(q)
=
[\gcd(m,n)]_q
\frac{1}{[m+n]_q}
\binom{m+n}{n}_q
\]
from~\cite{XinZhong20}.
Although $C^{\mathop{gen}}_{m,n}$ is always integral, it had no known combinatorial interpretation.
Simental raised the question~\cite{Simental21} of whether it admits an enumerative interpretation, following a problem he learned from Etingof, who attributed it to Stanley.
The aim of this note is to provide two such descriptions: one in terms of necklaces and one in terms of Dyck paths.\\

Note that the connection between rectangular Catalan combinatorics and affine Springer theory goes back to work of Hikita~\cite{Hikita14} and Gorsky-Mazin~\cite{GorskyMazin13}; see also Gorsky-Mazin-Vazirani~\cite{GorskyMazinVazirani16}.
In this setting, the homology of the relevant affine Springer fibers carries an action of the type $A$ rational Cherednik algebra~\cite{OblomkovYun16,GarnerKivinen23,GorskySimentalVazirani24}.
Rational Cherednik algebras of type $A$ are special cases of quantized Gieseker varieties, and the dimension formula of~\cite{EtingofKrylovEA21} is a higher-rank analogue of the rectangular Catalan number.
This representation also has a geometric realization through generalized affine Springer fibers~\cite{GorskySimentalVazirani24}, while~\cite{GonzalezSimentalVazirani24} gives a natural multiparameter deformation of the same dimension.
Thus the enumerative question is natural both from Catalan combinatorics and from affine Springer theory; compare also Conjecture~4 of Xin and Zhong~\cite{XinZhong20}.

\subsection{Catalan numbers}
Any strictly positive rational number $q$ can be written in a unique manner as

\[
q=\frac{a}{b-a}
\]

where $0<a<b$ are coprime integers.
The Catalan number of $q$ is defined as

\[
\mathop{Cat}(q)=\frac{1}{a+b}\binom{a+b}{a}.
\]

If the input is a strictly positive natural number $n$, one recovers the usual Catalan numbers

\[
\mathop{Cat}(n)=\frac{1}{2n+1}\binom{2n+1}{n}=\frac{1}{n+1}\binom{2n}{n}.
\]

Catalan numbers are ubiquitous in algebraic combinatorics - see~\cite{Stanley15}.
In particular, they count the number of Dyck words or Dyck paths of a certain type.
A Dyck word is a valid sequence of parentheses, that is, a string of equally many symbols $``("$ and $``)"$, such that, when read from left to right, the number of closing parentheses never exceeds the number of opening parentheses.
Such a word can also be represented as a lattice path joining the corners of a square grid, which always remains above (or touches) the diagonal and where the only possible moves are \emph{up} [U - $``("$ - $\circ$] and \emph{right} [R - $``)"$ - $\bullet$].
Those paths are also referred to as Dyck paths.
See Figure~\ref{fig:dyck-examples} on the left for a basic example.

\begin{figure}[h!]
\centering
\begin{tikzpicture}
  \begin{scope}
    \DrawDyckPath{3}{3}{(0,0) -- (0,1) -- (1,1) -- (1,2) -- (1,3) -- (2,3) -- (3,3)}
  \end{scope}
  \begin{scope}[xshift=3.5cm]
    \DrawDyckPath{3}{2}{(0,0) -- (0,1) -- (1,1) -- (1,2) -- (2,2) -- (3,2)}
  \end{scope}
\end{tikzpicture}
\caption{Left: the path URUURR corresponding to the Dyck word $()(())$. Right: the rational Dyck path URURR on a $2\times 3$ grid.}
\label{fig:dyck-examples}
\end{figure}

The above construction can be generalized to rational Dyck paths:  lattice paths joining the corners of a rectangular grid, as in Figure~\ref{fig:dyck-examples} on the right.

Dyck paths on an $n\times n$ square are counted by the Catalan numbers

\[
\frac1{n+1}\binom{2n}{n}
\]

while rational $(m,n)$ Dyck paths, i.e. paths on an $m\times n$ rectangle\footnote{Our convention is to first count rows, then columns.},  are counted by

\[
\frac1{m+n}\binom{m+n}{n}
\]

when $m$ and $n$ are coprime.
Notice that the above formulas do not encompass all possible types of Dyck paths: when $m$ and $n$ are distinct but not relatively prime the general formula is significantly more complex and is due to Grossman and Bizley~\cite{Bizley54}.\\

\subsection{Results}

An \emph{anchor} on an $(m,n)$ Dyck path is a vertex whose coordinates lie on the diagonal $ny-mx=0$.
By definition, any Dyck path has at least two anchor points, the initial and final vertex.
Given a Dyck path $p$, we will denote the number of anchor points away from the origin by $a(p)$.

\begin{defi}
The anchored weight of $p$ is

\[
w_P(p)=\frac{{\gcd(m,n)}}{a(p)} \in \IQ
\]

\end{defi}
Note that $w_P(p)$ is in general not an integer as can be seen in Figure~\ref{fig:fullexample}

\begin{restatable}{thm}{dyckpaththeorem}
\label{thm:1}
The function $C^{\mathop{gen}}_{m,n}$ counts the (anchor) weighted number of rational Dyck paths on an $m\times n$ grid.
\end{restatable}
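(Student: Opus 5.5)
We will prove that
\[
\sum_{p} \frac{d}{a(p)} = \frac{d}{m+n}\binom{m+n}{n},\qquad d=\gcd(m,n),
\]
where the sum runs over all $(m,n)$ Dyck paths $p$. Equivalently, after cancelling $d$, we must show $\sum_p 1/a(p) = \frac{1}{m+n}\binom{m+n}{n}$. The tool is the cycle lemma (Dvoretzky--Motzkin / Bizley style), applied to all $\binom{m+n}{n}$ lattice paths from $(0,0)$ to $(n,m)$ under cyclic rotation.

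Write $m=dm'$ and $n=dn'$. Assign to each step a height increment: $+n$ for an up step and $-m$ for a right step, so that a path is a word with total sum $0$. The anchors of a path are exactly the prefixes of sum $0$. The key step is the standard fact about cyclic shifts of a zero-sum word $w$ of length $N=m+n$. Let $h_0$ be the minimum of the prefix sums $s_0=0, s_1,\dots,s_{N-1}$. The rotation of $w$ starting at position $i$ is a Dyck path (never below the diagonal) if and only if $s_i=h_0$. Moreover, for such a rotation the anchors away from the origin correspond bijectively to the positions $j$ (counted cyclically, including the endpoint) with $s_j=h_0$. Hence, if $k(w)$ denotes the number of indices $i\in\{0,\dots,N-1\}$ achieving the minimum, then each of the $k(w)$ Dyck rotations of $w$ has exactly $a=k(w)$ anchors. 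This is where care is needed: rotations by different amounts may give the same word when $w$ is periodic, so I will count rotations as labelled pairs $(w,i)$ rather than as distinct words.

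Now double count the set of pairs $(w,i)$ with $w$ an arbitrary path and $i\in\mathbb{Z}/N$. For a fixed path $p$, the rotation map $(w,i)\mapsto \mathrm{rot}_i(w)$ has exactly $N$ preimages of $p$, one for each $i$, given by $w=\mathrm{rot}_{-i}(p)$. For each $w$, weight the pair $(w,i)$ by $1/k(w)$ when $\mathrm{rot}_i(w)$ is Dyck and by $0$ otherwise. Summing over $i$ for fixed $w$ gives exactly $1$, since there are $k(w)$ Dyck rotations each of weight $1/k(w)$. So the total is $\binom{N}{n}$. Summing instead over the target $p$, each Dyck $p$ has $N$ preimages, and each of them has $k=a(p)$, because $a$ is invariant under the rotation correspondence. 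So the total is $N\sum_p 1/a(p)$. Equating the two totals gives $\sum_p 1/a(p)=\frac1N\binom{N}{n}$, and multiplying by $d$ yields the theorem.

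The main obstacle is verifying cleanly that the number of Dyck rotations of $w$, counted by index $i$, equals the anchor count of each such rotation, including the final vertex and excluding the origin. I will handle this by noting that rotating to a minimum position shifts all prefix sums by $-h_0$. The zeros of the rotated path at positions $1,\dots,N$ are then exactly the minimum positions of $w$, shifted cyclically, with the start position reappearing as position $N$.
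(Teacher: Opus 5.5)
Your proof is correct and follows essentially the same route as the paper: a cycle-lemma statement (Lemma~\ref{lem:a(p)}, that a Dyck word has exactly $a(p)$ Dyck rotations counted by index), followed by the double count $\sum_{p\in D_{m,n}} (m+n)/a(p) = |W_{m,n}|$. Where you differ is in care rather than in idea. Your prefix-sum-minimum version of the cycle lemma replaces the paper's valley/slope argument. Your labelled pairs $(w,i)$ make precise what the paper leaves as ``up to rotational symmetry'' for periodic words.
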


The details of Theorem~\ref{thm:1}  are worked out in Section~\ref{sec:dyck} but are the corollary of another combinatorial description in terms of necklaces:

\begin{restatable}{thm}{necklacetheorem}
\label{thm:2}
The function $C^{\mathop{gen}}_{m,n}$ counts the weighted number of binary necklaces of type $(m,n)$
\end{restatable}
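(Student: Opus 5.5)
The plan is to read Theorem~\ref{thm:2} as an orbit-counting statement for the cyclic group acting on words. I will assume, as the theorem's phrasing suggests, that a binary necklace of type $(m,n)$ is an orbit of words with $m$ beads of one colour and $n$ of the other under the rotation action of $\IZ/(m+n)\IZ$. I will also assume that the weight of a necklace $\nu$ is
\[
w_N(\nu)=\frac{\gcd(m,n)}{s(\nu)},
\]
where $s(\nu)$ is its order of rotational symmetry, i.e.\ the number of rotations fixing a representative word. This is the analogue of the anchored weight $w_P$, with $s(\nu)$ playing the role of $a(p)$. If the paper's definition is phrased differently, the first step is to check that it agrees with this one.

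\textbf{Step 1: the symmetry order divides $g=\gcd(m,n)$.} If a word $w$ is fixed by a rotation, its stabilizer is a subgroup of $\IZ/(m+n)\IZ$ of some order $s$. Then $w$ is the concatenation of $s$ copies of a block of length $(m+n)/s$. Each block contains $m/s$ beads of the first colour and $n/s$ of the second, so $s$ divides both $m$ and $n$, hence divides $g$. In particular $w_N(\nu)$ is a well-defined positive rational number.

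\textbf{Step 2: orbit--stabilizer.} The orbit of $w$ has size $(m+n)/s(w)$. The total set of words of type $(m,n)$ has $\binom{m+n}{n}$ elements, and it is the disjoint union of the necklaces. Therefore
\[
\binom{m+n}{n}=\sum_{\nu}\frac{m+n}{s(\nu)} .
\]
Multiplying by $g/(m+n)$ gives
\[
\sum_\nu \frac{g}{s(\nu)}=\frac{g}{m+n}\binom{m+n}{n}=C^{\mathop{gen}}_{m,n},
\]
which is exactly the statement.

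\textbf{Main point of care.} The computation itself is routine; the only real subtlety is pinning down the weight. I need to make sure that the weight intended in the paper is $g$ divided by the stabilizer order, and not, say, $g$ divided by the number of primitive periods. I would confirm this on a small non-coprime case. For $(m,n)=(2,2)$ there are two necklaces. The necklace $\circ\circ\bullet\bullet$ has $s=1$ and weight $2$. The necklace $\circ\bullet\circ\bullet$ has $s=2$ and weight $1$. The total is $3$, matching $\frac{2}{4}\binom{4}{2}=3$.

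\textbf{Towards Theorem~\ref{thm:1}.} The same bookkeeping sets up the path statement. Using the cycle lemma, each orbit is matched with Dyck paths so that $s(\nu)$ becomes the number of anchors $a(p)$. The weights then transfer.
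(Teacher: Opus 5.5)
Your argument is correct and is exactly the paper's: both write the weighted sum as $\frac{\gcd(m,n)}{m+n}\sum_\nu \frac{m+n}{s(\nu)}$, recognize $\frac{m+n}{s(\nu)}$ (the paper writes $r$ for your $s$) as the number of words lying over $\nu$, and sum to $\binom{m+n}{n}$; your Step 1 gives $s(\nu)\mid\gcd(m,n)$, so $w_N$ is in fact an integer, as the paper asserts. Your closing aside is not needed for this statement, and it is inaccurate as stated: $s(\nu)$ does not become $a(p)$ (for $URUURR$ one has $s=1$ but $a(p)=2$); rather, an orbit contains $a(p)/s(\nu)$ distinct Dyck words, each of weight $\gcd(m,n)/a(p)$, so only the totals over each orbit agree.
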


We will recall the notion of necklace in Section~\ref{sec:necklaces} where the weight will also be defined.  The weight roughly encodes the entropy or lack of symmetry of a necklace.
Finally, we will give a fully (unweighted) enumerative interpretation in terms of binary necklaces with some prescribed blocks of length $\frac{m+n}{\gcd(m,n)}$.
A marked necklace will be a necklace together with the choice of a representative block in its orbit under rotational symmetries:

\begin{restatable}{thm}{markednecklacetheorem}
\label{thm:3}
The function $C^{\mathop{gen}}_{m,n}$ counts the number of marked binary necklaces of type $(m,n)$
\end{restatable}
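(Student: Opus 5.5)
The plan is to count all binary words directly and then pass to necklaces by orbit counting. Throughout, write $d=\gcd(m,n)$, $N=m+n$ and $L=N/d$, so that
\[
C^{\mathop{gen}}_{m,n}=\frac{d}{N}\binom{N}{n}=\frac{1}{L}\binom{N}{n}.
\]
Let $W$ be the set of binary words of length $N$ with $n$ letters $\bullet$ and $m$ letters $\circ$, so $|W|=\binom{N}{n}$. The cyclic group $\IZ/N\IZ$ acts on $W$ by rotation, and a binary necklace of type $(m,n)$ is an orbit of this action. The goal is to express $|W|/L$ as a sum over orbits of a nonnegative integer. That integer should be the number of admissible markings of the necklace, which is also the weight appearing in Theorem~\ref{thm:2}.

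\textbf{Step 1: orbit sizes are multiples of $L$.} Take a word $w\in W$ with primitive period $p$, so $p\mid N$ and the orbit of $w$ has exactly $p$ elements. Write $s=N/p$. Then $w$ is the $s$-fold repetition of a block of length $p$. That block contains $n/s$ bullets and $m/s$ circles, so $s$ divides both $m$ and $n$, hence $s\mid d$. It follows that $p=N/s=L\cdot(d/s)$ is a multiple of $L$.

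\textbf{Step 2: the count.} Summing over orbits $\mathcal O$,
\[
\frac{1}{L}\binom{N}{n}=\sum_{\mathcal O}\frac{|\mathcal O|}{L}=\sum_{\mathcal O}\frac{p(\mathcal O)}{L}.
\]
By Step~1 each summand $p(\mathcal O)/L=d/s$ is a positive integer. This is exactly the weighted count of Theorem~\ref{thm:2}, provided the weight there is identified with $p/L$, a quantity measuring the lack of symmetry of the necklace.

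\textbf{Step 3: the bijection with marked necklaces.} It remains to show that $p/L$ is the number of ways to mark a necklace of period $p$. Cut one fundamental period of the necklace into $p/L$ consecutive blocks of length $L$. The rotations of the necklace act transitively on these blocks. Choosing one block as the representative therefore gives exactly $p/L$ inequivalent marked necklaces.

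Concretely, a marked necklace is an orbit of $W$ under the subgroup generated by rotation by $L$, taken modulo the global rotations that preserve the block decomposition. Under this identification, summing over necklaces gives $\sum_{\mathcal O} p(\mathcal O)/L=C^{\mathop{gen}}_{m,n}$.

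The main obstacle is Step~3. One has to fix the definition of a marked necklace precisely so that necklaces with extra symmetry, such as $\circ\bullet\circ\bullet$ with $m=n=2$, are counted exactly $p/L$ times and not $d$ times. I would first state the marking as the choice of a residue class modulo $p/L$ of block starting positions within one period. I would then check this definition against small cases before writing the general bijection.
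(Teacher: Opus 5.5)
Your Steps~1 and~2 are the paper's proof of Theorem~\ref{thm:2}. The paper's weight $d/r$ equals your $p/L$ because $r=N/p$. Step~1 also proves the divisibility $L\mid p$, which the paper only asserts. The paper then gets Theorem~\ref{thm:3} by noting that each necklace has exactly $d/r$ distinguishable blocks, so your approach is the same. The problem is Step~3, which is the only content of Theorem~\ref{thm:3} beyond Theorem~\ref{thm:2}. As written it is not correct.

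Write $\rho$ for rotation by one bead. The claim that ``the rotations of the necklace act transitively on these blocks'' is false for the symmetry group $\langle\rho^p\rangle$. An asymmetric necklace with $d\ge 2$ has trivial symmetry group but $d$ blocks in its period. If instead you mean rotations by multiples of $L$, transitivity would identify all markings rather than separate them. What you need is the following. Cut the whole necklace at a fixed offset into $d$ blocks of length $L$. The group $\langle\rho^p\rangle$, of order $r=N/p$, maps blocks to blocks because $L\mid p$ (Step~1 again), and it acts freely on them. Hence there are $d/r=p/L$ orbits of blocks, and the $p/L$ blocks of one period form a transversal.

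Your ``concrete'' model is also wrong. A $\langle\rho^L\rangle$-orbit in $W$ is a necklace with a cut offset modulo $L$, but with no chosen block. Each necklace splits into $L$ such orbits, each of size $p/L$. Quotienting further by rotations that preserve the block decomposition does nothing, since those rotations are exactly $\langle\rho^L\rangle$. So this model counts $L\,|\Omega_{m,n}|$, which is $8\neq 10$ for $(m,n)=(3,3)$. What you want are the $p/L$ \emph{elements} of a single distinguished $\langle\rho^L\rangle$-orbit per necklace, e.g.\ $\{\rho^{kL}s(\omega)\}_k$.

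This exposes the missing ingredient: the cut offset must be fixed canonically, once per necklace, before ``choosing a block'' makes sense. The paper takes as blocks the length-$L$ segments starting at positions $\equiv 1 \pmod L$ of the lexicographically minimal representative $s(\omega)$. A marked necklace is then a necklace together with an orbit of such blocks under its rotational symmetries. If the offset is left free, you get $p$ markings per necklace and $\binom{N}{n}$ in total. Any canonical choice of offset works, since the orbit count $p/L$ does not depend on it. Once the offset is fixed, your description of a marking as a residue class modulo $p/L$ of block indices in $\IZ/d\IZ$ is correct (note $p/L\mid d$), and the proof closes.
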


\subsection*{Acknowledgments}
The author would like to thank Jos{é} Simental for helpful suggestions and references, and Andr{é}as Dillies for introducing him to necklaces.

\section{Necklaces}
\label{sec:necklaces}

Recall that a binary necklace of type $(m,n)$ is an equivalence class of strings made of two symbols, the first one appearing $m$ times and the second $n$ times. Combinatorially, it can be described by a closed chain of $m+n$ beads with the above prescribed proportions as is illustrated in Figure~\ref{fig:necklace}.

\begin{figure}[h]
\centering
\begin{tikzpicture}[baseline=(current bounding box.center)]
  \path[use as bounding box] (-3.15,-1.35) rectangle (3.15,1.35);
  \begin{scope}[xshift=-1.7cm]
    \DrawBinaryNecklace{90/black,30/black,-30/white,-90/black,-150/white,-210/white}
  \end{scope}
  \begin{scope}[xshift=1.7cm]
    \DrawBinaryNecklace{90/black,30/white,-30/black,-90/white,-150/black,-210/white}
  \end{scope}
\end{tikzpicture}
\caption{Two necklaces of type $(3,3)$.  The left one has weight $w_N=3$ and the right one $w_N=1$.}
\label{fig:necklace}
\end{figure}

\subsection{Weighted bijection.}

Let $\Omega_{m,n}$ be the set of binary necklaces of type $(m,n)$.
Given a particular necklace $\omega \in \Omega_{m,n}$, we denote by $r(\omega)$ (or $r$ when there is no ambiguity) the order of its group of symmetries under rotation.

\begin{defi}
The weight of $\omega$ is
\[
w_N(\omega)=\frac{{\gcd(m,n)}}{r} \in \IN
\]

\end{defi}

Note that $w$ is an integer as a fundamental region for the necklace must contain a multiple of
 $\frac{m+n}{{\gcd(m,n)}}$ beads.
To understand $w$ imagine some necklace $\omega$.
If $\omega$ were to have a rotational symmetry, its fundamental region would have a size multiple of  $\frac{m+n}{{\gcd(m,n)}}$.
Pick a starting bead and partition $\omega$ into subwords of that length.
Then $w$ is the number of \textbf{extrinsically} distinguishable blocks.
Another description of $w_N$ is thus the size (measured in minimal block size) of the fundamental region of a necklace under rotation, it is a certain measure of entropy.\\

As an example, the necklaces of Figure~\ref{fig:necklace} can be broken as follows in blocks of size $\frac63$ :

\begin{figure}[h]
\centering
\begin{tikzpicture}[baseline=(current bounding box.center)]
  \path[use as bounding box] (-3.15,-1.6) rectangle (3.15,1.6);
  \begin{scope}[xshift=-1.7cm]
    \DrawBinaryNecklace{90/black,30/black,-30/white,-90/black,-150/white,-210/white}
    \foreach \ang in {0,-120,120}
      \draw[necklace separator] (\ang:.80) -- (\ang:1.20);
  \end{scope}
  \begin{scope}[xshift=1.7cm]
    \DrawBinaryNecklace{90/black,30/white,-30/black,-90/white,-150/black,-210/white}
    \foreach \ang in {0,-120,120}
      \draw[necklace separator] (\ang:.80) -- (\ang:1.20);
  \end{scope}
\end{tikzpicture}
\caption{Necklace of type $(3,3)$ decomposed into blocks of size $\frac{m+n}{{\gcd(m,n)}}$.}
\label{fig:necklace-blocks}
\end{figure}

The first one has 3 distinguishable starting blocks, the second only 1; i.e.
\[
w_N(\bullet \circ |  \circ  \bullet | \bullet \circ ) = 3 \quad \textrm{and} \quad
w_N( \bullet \circ |  \bullet \circ | \bullet \circ ) = 1.
\]

We see that the latter corresponds to $\displaystyle \frac{{\gcd(3,3)}}{|\IZ_3|}$ and the former to $\displaystyle \frac{{\gcd(3,3)}}{|\IZ_1|}$.

\necklacetheorem*

\begin{proof}
\begin{equation}
\sum_{\omega \in \Omega_{m,n}} w_N(\omega) =
\sum_{\omega \in \Omega_{m,n}} \frac{{\gcd(m,n)}}{r(\omega)} =
\frac{{\gcd(m,n)}}{m+n} \sum_{\omega \in \Omega_{m,n}} \frac{m+n}{r(\omega)} \label{eq:sumnecklace}
\end{equation}

Write $W_{m,n}$ for the set of binary words of type $(m,n)$, i.e. strings with respectively $m$ and $n$ characters of each type.
Let
\[
\pi: W_{m,n} \rightarrow \Omega_{m,n}
\]

be the obvious quotient map.
The number of elements in the preimage of a necklace is
\[
|\pi^{-1}(\omega)|=\frac{m+n}{r(\omega)}
\]

whence expression~\ref{eq:sumnecklace} becomes
\[
\frac{{\gcd(m,n)}}{m+n} \sum_{\omega \in \Omega_{m,n}} |\pi^{-1}(\omega)| =
\frac{{\gcd(m,n)}}{m+n} \left| W_{m,n} \right| =
\frac{{\gcd(m,n)}}{m+n} \binom{m+n}{n}
\]

\end{proof}

\subsection{Unweighted bijection.}

In the above argument we split our necklace of type $(m,n)$ into segments of length $g=\frac{m+n}{\gcd(m,n)}$.
While the original cut is a priori arbitrary, we will now remove any unnecessary freedom.
Let $s$ be a binary word representing a necklace.
We will order those words lexicographically through the order $\circ \prec \bullet$ (or $U \prec R$).
Let $s(\omega)$ be the lexicographically minimal word representing the necklace $\omega$.
We will now draw a necklace by representing the first bead (symbol) of $s(\omega)$ on top and then proceeding clockwise.\\

A \emph{block} is a succession of $g$ successive beads on a necklace whose counterpart in $s(\omega)$ starts in a position congruent to $1$ modulo $g$.
We say that a block is \emph{distinguishable} if it represents a distinct orbit among these blocks under the rotational symmetries of the necklace\footnote{One could e.g. decide to take the leftmost representative in $s(\omega)$.}.
A \emph{marked} necklace is a necklace together with a distinguishable block.
Figure~\ref{fig:markednecklaces} shows the possible markings on the necklaces URURUR and URUURR.

We can now rephrase Theorem~\ref{thm:2} in this language:

\markednecklacetheorem*

\begin{figure}[h]
\begin{tikzpicture}[
  necklace string/.style={gray!55, thin},
  necklace bead/.style={circle, draw=black, inner sep=0pt, minimum size=7pt},
  necklace marker/.style={red!75!black, line width=.8pt}
]

\newcommand{\MarkedBlockOne}{
  \begin{scope}[shift={(.433,.75)}, rotate=-30]
    \draw[necklace marker] (-.78,-.25) rectangle (.78,.25);
  \end{scope}
}
\newcommand{\MarkedBlockTwo}{
  \begin{scope}[shift={(.433,-.75)}, rotate=30]
    \draw[necklace marker] (-.78,-.25) rectangle (.78,.25);
  \end{scope}
}
\newcommand{\MarkedBlockThree}{
  \begin{scope}[shift={(-.866,0)}, rotate=90]
    \draw[necklace marker] (-.78,-.25) rectangle (.78,.25);
  \end{scope}
}

\newcommand{\NecklaceURURUR}{
  \draw[necklace string] (0,0) circle (1);
  \foreach \ang/\col in {90/white,30/black,-30/white,-90/black,-150/white,150/black}
    \node[necklace bead, fill=\col] at (\ang:1) {};
}

\newcommand{\NecklaceURURRU}{
  \draw[necklace string] (0,0) circle (1);
  \foreach \ang/\col in {90/white,30/white,-30/black,-90/black,-150/white,150/black}
    \node[necklace bead, fill=\col] at (\ang:1) {};
}

\begin{scope}[shift={(0,2.1)}]
  \NecklaceURURUR
  \MarkedBlockOne
\end{scope}

\begin{scope}[shift={(-2.9,-1.1)}]
  \NecklaceURURRU
  \MarkedBlockOne
\end{scope}

\begin{scope}[shift={(0,-1.1)}]
  \NecklaceURURRU
  \MarkedBlockTwo
\end{scope}

\begin{scope}[shift={(2.9,-1.1)}]
  \NecklaceURURRU
  \MarkedBlockThree
\end{scope}

\end{tikzpicture}
\caption{The marked necklaces supported by the necklaces URURUR and URUURR.}
\label{fig:markednecklaces}
\end{figure}

\section{Anchored Dyck Paths}
\label{sec:dyck}

Consider $p$ a rational Dyck path of type $(m,n)$, i.e. with $m$ rows and $n$ columns and encoded by a given Dyck word.

\begin{lem}
\label{lem:a(p)}
There are $a(p)$ (possibly trivial) cyclic translations of this word that give another rational Dyck path.
\end{lem}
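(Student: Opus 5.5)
We work with the height function of the word. Write the Dyck word of $p$ as $w=w_1w_2\cdots w_{m+n}$. Assign weight $+n$ to each up step U and weight $-m$ to each right step R. For $0\le k\le m+n$ let $h_k$ be the sum of the first $k$ weights; it equals $ny-mx$ at the $k$-th vertex of $p$. Then $h_0=h_{m+n}=0$. The path is a rational Dyck path exactly when $h_k\ge 0$ for all $k$, since we use the convention that the path stays weakly above the diagonal. The anchors away from the origin are exactly the indices $k\in\{1,\dots,m+n\}$ with $h_k=0$, so there are $a(p)$ of them. By ``cyclic translations'' we mean the $m+n$ rotations $w^{(j)}=w_{j+1}\cdots w_{m+n}w_1\cdots w_j$ for $j=0,\dots,m+n-1$, counted by position. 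Coinciding words are counted separately, which is what ``possibly trivial'' indicates.

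\textbf{Step 1.} We compute the height function of a rotation. Because the total weight is $h_{m+n}=0$, the height sequence of $w^{(j)}$ is
\[
h^{(j)}_k = h_{j+k}-h_j \quad\text{for } j+k\le m+n,
\]
and $h^{(j)}_k = h_{j+k-(m+n)}-h_j$ otherwise. So up to indexing, the heights of $w^{(j)}$ are the values $h_i-h_j$ as $i$ runs cyclically over all indices. It follows that $w^{(j)}$ is a rational Dyck path if and only if $h_j\le h_i$ for every $i$, that is, if and only if $h_j=\min_i h_i$.

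\textbf{Step 2.} Since $p$ is itself a Dyck path, $\min_i h_i = h_0 = 0$. Hence the rotation $w^{(j)}$ is a Dyck path exactly when $h_j=0$. We need to count the $j\in\{0,\dots,m+n-1\}$ with $h_j=0$. The index $j=0$ qualifies, corresponding to the trivial translation. The remaining ones are the anchors at positions $1,\dots,m+n-1$. There are $a(p)-1$ of these, because the final vertex $k=m+n$ is an anchor that is excluded from this range. In total we get $1+(a(p)-1)=a(p)$ rotations. Equivalently, the shifts $j$ correspond bijectively to the anchors at positions $\{1,\dots,m+n\}$ taken modulo $m+n$: the final vertex is identified with the origin and corresponds to the trivial shift.

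\textbf{Main obstacle.} The only delicate point is bookkeeping. We must make sure that rotations are counted by position rather than as distinct words. When $p$ is periodic, different shifts can give the same word, and the lemma's ``possibly trivial'' wording, together with its later use in relating $w_P$ to $w_N$, indicates that they should be counted with multiplicity. We must also match the index range $\{0,\dots,m+n-1\}$ of shifts with the anchor range $\{1,\dots,m+n\}$ through the identification modulo $m+n$. The key ingredient is that the total weight is zero, which makes the rotated height function a shift of the original one.
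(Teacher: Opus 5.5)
Your proof is correct and follows essentially the paper's route. The paper also shows that anchors are valid starting points, and that starting at a point $P$ strictly above the diagonal forces the next anchor $D(P)$ below it; your formula $h^{(j)}=h_i-h_j$ (a shift works iff $h_j=\min_i h_i=0$) is a cleaner version of the same observation, which also proves the paper's ``clearly'' direction and makes the count of shifts by position explicit.
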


\begin{proof}
This can be deduced from the Cycle Lemma~\cite{DvoretzkyMotzkin47,DershowitzZaks90} but let us derive the argument.
Clearly all $a(p)$ diagonal points could serve as starting points.
Necessarily, any other potential starting point of Dyck path must be a valley without which the next vertex would be $(1,0)$, i.e. well under the diagonal.
Take some generic non diagonal valley $P$ and let $D(P)$ be the next diagonal point along the original path.
The slope $\delta$ of the line $PD(P)$ is smaller than $\frac{m}{n}$ or $D(P)$ would lie above the diagonal.
Hence, if we were to start the Dyck path at $P$, the point $D(P)$ would lie under the diagonal, a contradiction.
\end{proof}

\dyckpaththeorem*

\begin{proof}
Let $D_{m,n} \subset W_{m,n}$ be the Dyck words of type $(m,n)$.
We have
\begin{equation}
\sum_{p \in D_{m,n}} w_P(p) =
\sum_{p \in D_{m,n}} \frac{\gcd(m,n)}{a(p)} =
\frac{\gcd(m,n)}{m+n} \sum_{p \in D_{m,n}} \frac{m+n}{a(p)} \label{eq:sumdyck}
\end{equation}

We now count the remaining sum by comparing $W_{m,n}$ with $D_{m,n}$:
through cyclic rotations we have (up to rotational symmetry) a
\[
m+n \longleftrightarrow a(p)
\]

correspondence between Dyck words and generic $(m,n)$ words, whence
\[
 \sum_{p \in D_{m,n}} \frac{m+n}{a(p)} = | W_{m,n}|
 \]

Substituting this into~\ref{eq:sumdyck} we obtain
\[
\sum_{p \in D_{m,n}} w_P(p) =
\frac{\gcd(m,n)}{m+n}\binom{m+n}{n} =
C^{\mathop{gen}}_{m,n}.
\]

\end{proof}

\begin{rmk}
Needless to say, we could have arrived at the same result by providing a correspondence between $D_{m,n}$ and $\Omega_{m,n}$.
\end{rmk}

\begin{eg}
Figure~\ref{fig:fullexample} displays all necklaces, their marked necklaces (rotated to be lexicographically minimal when read from the top), and Dyck paths of type $(3,3)$ with their corresponding weights. As expected, in each case the weighted sum adds up to $C^{\mathop{gen}}_{3,3}=10$.
\end{eg}

\begin{figure}[h]
\centering
\begin{tikzpicture}
\path[use as bounding box] (-1.5,-11.25) rectangle (11.4,2.15);
\node[font=\bfseries\tiny] at (0,1.75) {Necklace};
\node[font=\bfseries\tiny] at (3.65,1.75) {Marked necklace(s)};
\node[font=\bfseries\tiny] at (8.65,1.75) {Dyck path representative(s)};

\newcommand{\exampleNecklace}[3]{
  \begin{scope}[shift={(0,#1)}]
    \DrawBinaryNecklace{#2}
    \node[font=\scriptsize] at (0,-1.38) {$w_N=#3$};
  \end{scope}
}
\newcommand{\exampleDyck}[3]{
  \begin{scope}[shift={(#1,#2)}]
    \DrawDyckPath{3}{3}{#3}
  \end{scope}
}
\newcommand{\exampleMarkedBlockOne}{
  \begin{scope}[shift={(.433,.75)}, rotate=-30]
    \draw[red!75!black, line width=.8pt] (-.78,-.25) rectangle (.78,.25);
  \end{scope}
}
\newcommand{\exampleMarkedBlockTwo}{
  \begin{scope}[shift={(.433,-.75)}, rotate=30]
    \draw[red!75!black, line width=.8pt] (-.78,-.25) rectangle (.78,.25);
  \end{scope}
}
\newcommand{\exampleMarkedBlockThree}{
  \begin{scope}[shift={(-.866,0)}, rotate=90]
    \draw[red!75!black, line width=.8pt] (-.78,-.25) rectangle (.78,.25);
  \end{scope}
}
\newcommand{\exampleMarkedNecklaceOne}[2]{
  \begin{scope}[shift={(3.65,#1)}, scale=.48, transform shape]
    \DrawBinaryNecklace{#2}
    \exampleMarkedBlockOne
  \end{scope}
}
\newcommand{\exampleMarkedNecklacesThree}[2]{
  \begin{scope}[shift={(2.45,#1)}, scale=.48, transform shape]
    \DrawBinaryNecklace{#2}
    \exampleMarkedBlockOne
  \end{scope}
  \begin{scope}[shift={(3.65,#1)}, scale=.48, transform shape]
    \DrawBinaryNecklace{#2}
    \exampleMarkedBlockTwo
  \end{scope}
  \begin{scope}[shift={(4.85,#1)}, scale=.48, transform shape]
    \DrawBinaryNecklace{#2}
    \exampleMarkedBlockThree
  \end{scope}
}
\newcommand{\exampleNDyckWeight}[3]{
  \node[font=\scriptsize] at (#1,#2) {$w_N=#3$};
}
\newcommand{\examplePDyckWeight}[3]{
  \node[font=\scriptsize] at (#1,#2) {$w_P=#3$};
}

\exampleNecklace{0}{90/black,30/black,-30/black,-90/white,-150/white,-210/white}{3}
\exampleMarkedNecklacesThree{0}{90/white,30/white,-30/white,-90/black,-150/black,-210/black}
\exampleDyck{7.55}{-1.1}{(0,0)--(0,1)--(0,2)--(0,3)--(1,3)--(2,3)--(3,3)}
\examplePDyckWeight{8.65}{-1.5}{3}

\exampleNecklace{-3}{90/black,30/black,-30/white,-90/black,-150/white,-210/white}{3}
\exampleMarkedNecklacesThree{-3}{90/white,30/white,-30/black,-90/black,-150/white,-210/black}
\exampleDyck{6.25}{-4.1}{(0,0)--(0,1)--(1,1)--(1,2)--(1,3)--(2,3)--(3,3)}
\examplePDyckWeight{7.35}{-4.5}{\frac32}
\exampleDyck{8.95}{-4.1}{(0,0)--(0,1)--(0,2)--(1,2)--(2,2)--(2,3)--(3,3)}
\examplePDyckWeight{10.05}{-4.5}{\frac32}

\exampleNecklace{-6}{90/black,30/black,-30/white,-90/white,-150/black,-210/white}{3}
\exampleMarkedNecklacesThree{-6}{90/white,30/white,-30/black,-90/white,-150/black,-210/black}
\exampleDyck{7.55}{-7.1}{(0,0)--(0,1)--(0,2)--(1,2)--(1,3)--(2,3)--(3,3)}
\examplePDyckWeight{8.65}{-7.5}{3}

\exampleNecklace{-9}{90/black,30/white,-30/black,-90/white,-150/black,-210/white}{1}
\exampleMarkedNecklaceOne{-9}{90/white,30/black,-30/white,-90/black,-150/white,-210/black}
\exampleDyck{7.55}{-10.1}{(0,0)--(0,1)--(1,1)--(1,2)--(2,2)--(2,3)--(3,3)}
\examplePDyckWeight{8.65}{-10.5}{1}
\end{tikzpicture}

\caption{$C^{\mathop{gen}}_{3,3}=10$}
\label{fig:fullexample}
\end{figure}


\begin{thebibliography}{GSV24b}

\bibitem[Biz54]{Bizley54}
Michael Terence~Lewis Bizley.
\newblock Derivation of a new formula for the number of minimal lattice paths
  from (0, 0) to (km, kn) having just t contacts with the line my= nx and
  having no points above this line; and a proof of {G}rossman's formula for the
  number of paths which may touch but do not rise above this line.
\newblock {\em Journal of the Institute of Actuaries}, 80(1):55--62, 1954.

\bibitem[DM47]{DvoretzkyMotzkin47}
Aryeh Dvoretzky and Theodore Motzkin.
\newblock A problem of arrangements.
\newblock {\em Duke Mathematical Journal}, 14(2):305--313, 1947.

\bibitem[DZ90]{DershowitzZaks90}
Nachum Dershowitz and Shmuel Zaks.
\newblock The cycle lemma and some applications.
\newblock {\em European Journal of Combinatorics}, 11(1):35--40, 1990.

\bibitem[EKLS21]{EtingofKrylovEA21}
Pavel Etingof, Vasily Krylov, Ivan Losev, and Jos{\'e} Simental.
\newblock Representations with minimal support for quantized {G}ieseker
  varieties.
\newblock {\em Mathematische Zeitschrift}, 298(3):1593--1621, 2021.

\bibitem[GK23]{GarnerKivinen23}
Niklas Garner and Oscar Kivinen.
\newblock Generalized affine {S}pringer theory and {H}ilbert schemes on planar
  curves.
\newblock {\em International Mathematics Research Notices}, 2023(8):6402--6460,
  2023.

\bibitem[GM13]{GorskyMazin13}
Eugene Gorsky and Mikhail Mazin.
\newblock Compactified {J}acobians and {$q,t$}-{C}atalan numbers, {I}.
\newblock {\em Journal of Combinatorial Theory, Series A}, 120(1):49--63, 2013.

\bibitem[GMV16]{GorskyMazinVazirani16}
Eugene Gorsky, Mikhail Mazin, and Monica Vazirani.
\newblock Affine permutations and rational slope parking functions.
\newblock {\em Transactions of the American Mathematical Society},
  368(12):8403--8445, 2016.
\newblock \href {https://arxiv.org/abs/1403.0303} {\path{arXiv:1403.0303}}.

\bibitem[GSV24a]{GonzalezSimentalVazirani24}
Nicolle Gonz{\'a}lez, Jos{\'e} Simental, and Monica Vazirani.
\newblock Higher rank (q,t)-{C}atalan polynomials, affine springer fibers, and
  a finite rational shuffle theorem.
\newblock {\em Transactions of the American Mathematical Society},
  377(07):5087--5127, 2024.

\bibitem[GSV24b]{GorskySimentalVazirani24}
Eugene Gorsky, Jos{\'e} Simental, and Monica Vazirani.
\newblock From representations of the rational {C}herednik algebra to parabolic
  {H}ilbert schemes via the {D}unkl--{O}pdam subalgebra.
\newblock {\em Transformation Groups}, 29(2):647--716, 2024.
\newblock \href {https://arxiv.org/abs/2004.14873} {\path{arXiv:2004.14873}}.

\bibitem[Hik14]{Hikita14}
Tatsuyuki Hikita.
\newblock Affine {S}pringer fibers of type {A} and combinatorics of diagonal
  coinvariants.
\newblock {\em Advances in Mathematics}, 263:88--122, 2014.
\newblock \href {https://arxiv.org/abs/1203.5878} {\path{arXiv:1203.5878}}.

\bibitem[OY16]{OblomkovYun16}
Alexei Oblomkov and Zhiwei Yun.
\newblock Geometric representations of graded and rational {C}herednik
  algebras.
\newblock {\em Advances in Mathematics}, 292:601--706, 2016.
\newblock \href {https://arxiv.org/abs/1407.5685} {\path{arXiv:1407.5685}}.

\bibitem[Sim21]{Simental21}
Jos{\'e} Simental.
\newblock Higher {C}atalan combinatorics: geometry and representation theory.
\newblock Algebra Seminar, University of Georgia, December 2021.

\bibitem[Sta15]{Stanley15}
Richard~P Stanley.
\newblock {\em Catalan numbers}.
\newblock Cambridge University Press, 2015.

\bibitem[XZ20]{XinZhong20}
Guoce Xin and Yueming Zhong.
\newblock On parity unimodality of {$q$}-catalan polynomials.
\newblock {\em The Electronic Journal of Combinatorics}, 27(1):Paper No. 1.3,
  2020.
\newblock \href {https://arxiv.org/abs/1912.01829} {\path{arXiv:1912.01829}}.

\end{thebibliography}
\end{document}